\newtheorem{thm}{Theorem}[section]
\newtheorem{lem}[thm]{Lemma}
\newtheorem{cor}[thm]{Corollary}
\newtheorem{defn-lem}[thm]{Definition-Lemma}
\newtheorem{prop}[thm]{Proposition}
\theoremstyle{remark}
\newtheorem{rem}[thm]{Remark}
\theoremstyle{definition}
\newtheorem{defn}[thm]{Definition}
\numberwithin{equation}{section}
\def \Z{{\mathbb Z}}
\def\map#1.#2.{#1 \longrightarrow #2}
\def\rmap#1.#2.{#1 \dasharrow #2}
\DeclareMathOperator{\depth}{depth}
\DeclareMathOperator{\sdepth}{sdepth}
\DeclareMathOperator{\Hdepth}{Hdepth}
\def\fb#1.{\underset #1 \to \times}
\def\pr#1.{\Bbb P^{#1}}
\def\ring#1.{\mathcal O_{#1}}
\def\mlist#1.#2.{{#1}_1,{#1}_2,\dots,{#1}_{#2}}
\def\uloopr#1{\ar@'{@+{[0,0]+(-4,5)} @+{[0,0]+(0,10)}
@+{[0,0]+(4,5)}}^{#1}}
\def\dloopr#1{\ar@'{@+{[0,0]+(-4,-5)} @+{[0,0]+(0,-10)}
@+{[0,0]+(4,-5)}}_{#1}}
\def\rloopd#1{\ar@'{@+{[0,0]+(5,4)} @+{[0,0]+(10,0)}
@+{[0,0]+(5,-4)}}^{#1}}
\newcommand{\rdots}{\mathinner{\mkern1mu\raise1pt\hbox{.}\mkern2mu\raise4pt\hbox{.} \mkern2mu\raise7pt\vbox{\kern7pt\hbox{.}}\mkern1mu}} 
\def\lloopd#1{\ar@'{@+{[0,0]+(-5,4)} @+{[0,0]+(-10,0)}
@+{[0,0]+(-5,-4)}}_{#1}}
\long\def\ignore#1{}
\long\def\ignore#1{#1}
\begin{document}

\begin{center}
{\bf HILBERT SERIES AND HILBERT DEPTH OF SQUAREFREE VERONESE IDEALS}
\end{center}

\begin{center}
Maorong Ge, Jiayuan Lin and Yulan Wang
\end{center}

{\small {\bf Abstract} In this paper, we obtain explicit formulas for the Hilbert series and Hilbert depth of squarefree Veronese ideals in a standard graded polynomial ring.}

\section{Introduction}

In recent years, $\textit{depth, Stanley depth, Hilbert depth}$ and their relations of graded modules over polynomial ring $R=K[x_1,\cdots, x_n]$ have been extensively studied (e.g. \cite{Anw}-\cite{Uli}). One of the motivations behind these researches is the famous Stanley conjecture. It says that $\sdepth M \ge \depth M$ for all finitely generated graded $R$-module $M$. Although Stanley conjecture has been confirmed in some special cases, it is still widely open. Analogous to $\textit{depth and Stanley depth}$, W. Bruns et al \cite{Brun1} introduced $\textit{Hilbert depth}$.  For the reader's convenience, let us recall the definition of $\textit{Hilbert depth}$ from \cite{Brun1}.

\begin{defn}
Let $M$ be a finitely generated graded $S$-module. A $\textit{Hilbert decomposition}$ of $M$ is a finite family

$$\mathscr{H}=\big (S_i,s_i \big )$$

\noindent such that $s_i \in \Z^m$ (where $m=1$ in the standard graded case and $m=n$ in the multigraded case), $S_i$ is a graded $K$-algebra retract of $R$ for each $i$, and 

$$M \cong \underset{i}{\bigoplus} S_i(-s_i)$$

\noindent as a graded $K$-vector space.

The number $\Hdepth \mathscr{H}=\underset{i}{\min} \depth S_i(-s_i)$ is called the $\textit{Hilbert depth}$ of $\mathscr{H}$. The $\textit{Hilbert depth}$ of $M$ is defined to be

$$\Hdepth M=\max\{\Hdepth \mathscr{H}: \mathscr{H} \hskip .1 cm \textit{is a Hilbert decomposition of M}\}.$$

\end{defn} 

Note that a Stanley decomposition breaks $M$ into a direct sum of submodules over suitable subalgebras, while a Hilbert decomposition only requires an isomorphism to the direct sum of modules over such subalgebras. The latter one depends only on the Hilbert series of $M$. Since Stanley decompositions are Hilbert decompositions, Hilbert depth provides an upper bound for the Stanley depth. 

For a finitely generated standard graded $R$-module $M=\bigoplus_{k \in \Z} M_k$, the $\textit{Hilbert series}$ of $M$ is given by the Laurent series $H_M(T)= \sum_{k \in \Z} H(M,k) T^k$, where $H(M,k)=\dim_K M_k$ is the $\textit{Hilbert function}$ of $M$. We say a Laurent series $\sum_{k \in \Z} a_k T^k$ is $\textit{positive}$ if $a_k \ge 0$ for all $k$. It is easy to see that any Hilbert series is positive. In \cite{Uli}, J. Uliczka proved that the Hilbert depth of $M$ is equal to $\max \{r:(1-T)^r H_{M}(T) \hskip .1 cm \textit{positive}\}$. Using this, W. Bruns et al \cite{Brun2} computed the Hilbert depth of the powers of the irrelevant maximal ideal in $R$. In this paper we deal with Hilbert series and Hilbert depth of squarefree Veronese ideals $I_{n,d}$. We prove the following theorem.

\begin{thm}
Let $R=K[x_1, \cdots, x_n]$ be the standard graded polynomial ring in $n$ variables over a field $K$. Let $I_{n,d}$ be the squarefree Veronese ideal generated by all squarefree monomials of degree $d$ in $R$. Then

$(1)$ The Hilbert series of $I_{n,d}$ is $H_{I_{n,d}}(T)=\overset{n-1}{\underset{i=d-1}{\sum}} \binom{i}{d-1} T^d (1-T)^{-n+i-d+1}$;

$(2)$ The Hilbert depth of $I_{n,d}$ is equal to $d+\left \lfloor \binom{n}{d+1}/\binom{n}{d} \right\rfloor=d+\left \lfloor \frac{n-d}{d+1} \right\rfloor=d-1+\left \lceil \frac{n-(d-1)}{d+1} \right\rceil$.
\end{thm}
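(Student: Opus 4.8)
The plan is to handle the two parts in turn: derive the Hilbert series explicitly, then feed it into Uliczka's criterion (quoted in the introduction) to pin down the Hilbert depth.

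For part (1) I would start from the monomial description of the ideal: a $K$-basis of the degree-$k$ component $(I_{n,d})_k$ consists of the monomials of degree $k$ whose support has at least $d$ elements, since a monomial lies in $I_{n,d}$ exactly when it is divisible by some squarefree generator. Splitting according to the exact support size $j$ gives $H(I_{n,d},k)=\sum_{j=d}^{n}\binom{n}{j}\binom{k-1}{j-1}$, a support of size $j$ being chosen in $\binom nj$ ways and then filled by a composition of $k$ into $j$ positive parts. Summing $\sum_{k\ge j}\binom{k-1}{j-1}T^k=T^j(1-T)^{-j}$ yields the compact form $H_{I_{n,d}}(T)=\sum_{j=d}^{n}\binom nj T^j(1-T)^{-j}$. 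To reach the exact expression in the statement I would then either apply one Pascal/Vandermonde rearrangement, or---what I find cleaner---verify the recursion $H_{I_{n,d}}=H_{I_{n-1,d}}+\tfrac{T}{1-T}H_{I_{n-1,d-1}}$ (separate the monomials that do and do not involve $x_n$), check that the claimed closed form satisfies the same recursion and base cases, and conclude by induction on $n$.

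For part (2), writing $r^\ast=d+\lfloor (n-d)/(d+1)\rfloor$, the goal becomes $\Hdepth I_{n,d}=\max\{r:(1-T)^rH_{I_{n,d}}(T)\text{ positive}\}=r^\ast$. The upper bound $\Hdepth I_{n,d}\le r^\ast$ is the easy half. A direct computation of the Hilbert function in degree $d+1$ gives $H(I_{n,d},d+1)=d\binom nd+\binom{n}{d+1}$, so the coefficient of $T^{d+1}$ in $(1-T)^rH_{I_{n,d}}$ equals $\binom{n}{d+1}-(r-d)\binom nd$; this is negative as soon as $r-d>\tfrac{n-d}{d+1}$, i.e. for $r=r^\ast+1$, whence $(1-T)^{r^\ast+1}H_{I_{n,d}}$ fails to be positive.

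The substance is the lower bound, i.e. the positivity of $(1-T)^{r^\ast}H_{I_{n,d}}$. Here I would observe that $(1-T)^{r^\ast}H_{I_{n,d}}=\sum_{a=d}^{r^\ast}c_aT^a+\sum_{j>r^\ast}\binom nj T^j(1-T)^{r^\ast-j}$, where the second sum has nonnegative coefficients and begins in degree $r^\ast+1$; consequently positivity is equivalent to $c_a\ge0$ for $d\le a\le r^\ast$, where $c_a=\binom{n-r^\ast+a-1}{a}-\sum_{j=0}^{d-1}(-1)^{a-j}\binom nj\binom{r^\ast-j}{a-j}$. I expect this to be the main obstacle, since the correction term is an alternating sum of unclear sign. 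My first attempt would be to prove $c_a\ge 0$ by tracking the coefficients as one raises the common denominator one unit at a time: moving from $(1-T)^{-g}$ to $(1-T)^{-(g+1)}$ replaces the current coefficient profile by its backward difference and injects the next binomial $\binom{n}{d+p+1}$ at the top. It should then suffice to show by induction on $p\le \lfloor (n-d)/(d+1)\rfloor$ that this profile remains nonnegative---in fact nonnegative, nondecreasing and convex---the monotonicity being exactly what keeps each difference nonnegative, while the inequality $r\le r^\ast$ is precisely what makes the injected binomials dominate, so that the induction survives up to $p=q$ and first breaks at $p=q+1$, matching the upper bound. Should a clean estimate for the alternating sum prove elusive, the same inequality also follows from the known Stanley decomposition of $I_{n,d}$ realizing minimal dimension $r^\ast$, which is a fortiori a Hilbert decomposition; this route would simultaneously show that Hilbert depth and Stanley depth agree for squarefree Veronese ideals.
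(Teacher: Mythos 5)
Your part (1) is correct and takes a genuinely different, more elementary route than the paper: you count monomials by exact support size to get $H_{I_{n,d}}(T)=\sum_{j=d}^{n}\binom{n}{j}T^{j}(1-T)^{-j}$, and then verify the stated closed form by induction on $n$ via $H_{I_{n,d}}=H_{I_{n-1,d}}+\frac{T}{1-T}H_{I_{n-1,d-1}}$. That recursion is exactly the paper's Proposition 2.1(2) read at the level of series, and your induction does close (the two shifted sums recombine by Pascal, $\binom{i-1}{d-1}+\binom{i-1}{d-2}=\binom{i}{d-1}$), so you bypass the paper's heavier detour through the trivariate generating function $F(x,y,z)$ of Proposition 2.2. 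Your upper bound in part (2) is the paper's argument verbatim: the coefficient of $T^{d+1}$ in $(1-T)^{r}H_{I_{n,d}}$ is $\binom{n}{d+1}-(r-d)\binom{n}{d}$, negative for $r=r^{\ast}+1$. Your reduction of positivity to the finitely many coefficients $c_a$, $d\le a\le r^{\ast}$, with the tail $\sum_{j>r^{\ast}}\binom{n}{j}T^{j}(1-T)^{r^{\ast}-j}$ manifestly nonnegative, is also correct and parallels the role of Lemma 3.1/3.2 in the paper.

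The gap is the lower bound, which is the substance of Theorem 1.2(2), and your proposal does not prove it. Your main route is only a plan, and the stated invariant is false: when $(d+1)\mid(n-d)$, say $n-d=q(d+1)$, one has $\binom{n}{d+1}=q\binom{n}{d}$, so at $r=r^{\ast}$ the profile has $c_{d+1}=0<c_d=\binom{n}{d}$ and is not nondecreasing; hence "nonnegative, nondecreasing and convex" cannot survive to $p=q$ as claimed, and whatever weakened invariant one tries to carry through the backward-difference steps is precisely the hard, unestablished content (your alternating formula for $c_a$ is correct but, as you concede, of unclear sign). Your fallback is worse than unproven, it is circular: no Stanley decomposition of $I_{n,d}$ realizing $d+\left\lfloor\binom{n}{d+1}/\binom{n}{d}\right\rfloor$ is known in general --- that is exactly the open conjecture recalled in Remark 1.3, with only partial cases settled in \cite{Keller} and \cite{Ge}, and the paper in fact hopes the implication will eventually run in the opposite direction (Hilbert decomposition informing Stanley depth). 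What the paper actually does here is the missing idea: Lemma 3.2 rewrites the coefficient as $b_{n,d,k,r}=\binom{n+k-r-1}{k}+(-1)^{k-d}\sum_{t=1}^{r-k+1}\binom{n-t}{d-1}\binom{r-(d-1)-t}{k-d}$, in which the correction sum is single-signed, so positivity reduces to the one explicit inequality of Proposition 3.3; that inequality is then imported from Proposition 3.1 of \cite{Brun2} (the powers-of-the-maximal-ideal case) by the substitution $n\mapsto n-(d-1)$, $r\mapsto r-(d-1)$, $s\mapsto d$. Without either that reduction to the known maximal-ideal inequality or a genuine proof of your coefficient induction, the theorem's lower bound remains open in your write-up.
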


\begin{rem}
The (multigraded) Stanley depth of $I_{n,d}$ has been conjectured (see \cite{Cim2} and \cite{Keller}) to be equal to $d+\left \lfloor \binom{n}{d+1}/\binom{n}{d} \right\rfloor$. M. Keller et al \cite{Keller} and M. Ge et al \cite{Ge} partially confirmed this conjecture but it is still open. We wonder whether a suitable modification of a Hilbert decomposition $\mathscr{H}$ with $\Hdepth \mathscr{H}=d+\left \lfloor \binom{n}{d+1}/\binom{n}{d} \right\rfloor$ will bring us a solution to that conjecture.  
\end{rem}

To prove Theorem $1.2 (2)$, it is sufficient to show that $\max \{r:(1-T)^r H_{I_{n,d}}(T) \hskip .1 cm \textit{positive}\}$

\noindent $=d+\left \lfloor \binom{n}{d+1}/\binom{n}{d} \right\rfloor$. In order to do so, we need a nice formula for $H_{I_{n,d}}(T)$. Surprisingly, we cannot find such a formula in literature. In section $2$ we use generating functions to obtain an explicit formula for $H_{I_{n,d}}(T)$. We refer the reader to \cite{Wil} for the basic theory of generating functions.  In section $3$, we prove that $(1-T)^r H_{I_{n,d}}(T)$ is $\textit{positive}$ if and only if $r \le d+\left \lfloor \binom{n}{d+1}/\binom{n}{d} \right\rfloor$. This implies Theorem $1.2 (2)$.

\noindent Notations: In this paper,we use the generalized binomial coefficient $\binom{m}{r}=\frac{m(m-1)\cdots(m-r+1)}{r!}$ when $r \ge 0$. The binomial coefficient $\binom{m}{r}=0$ when $r<0$.

\section{Hilbert series of squarefree Veronese ideals}

Let $H_{I_{n,d}}(T)=\overset{\infty}{\underset{k=d}{\sum}} a_{n,d,k} T^k$ be the Hilbert series of the squarefree Veronese ideal $I_{n,d}$, where $a_{n,d,k}=\dim_K (I_{n,d})_k$. Then $a_{n,d,k}$ satisfying the following properties.

\begin{prop}

$(1)$ $a_{n,1,k}=\binom{n+k-1}{k}$;

$(2)$ $a_{n,d,k}=a_{n-1,d,k}+\overset{k-d+1}{\underset{s=1}{\sum}} a_{n-1,d-1,k-s}$ for $\min\{n,k\} \ge d \ge 2$; and

$(3)$ $a_{n,d,k}=a_{n-1,d,k}+a_{n-1,d-1,k-1}+a_{n,d,k-1}-a_{n-1,d,k-1}$ for $\min\{n,k\} \ge d \ge 2$.

\end{prop}

\begin{proof}

$(1)$ is easy because $I_{n,1}$ is just the irrelevant maximal ideal $\mathfrak{m}$ in $R$ and its Hilbert series is $H_{I_{n,d}}(T)=\overset{\infty}{\underset{k=d}{\sum}} \binom{n+k-1}{k} T^k$. To prove $(2)$ and $(3)$, let
 
$$P_{n,d,k}=\{u : u \in I_{n,d} \hskip .1 cm \text{monomial}, \deg u=k \hskip .1 cm \text{and} \hskip .1 cm x_{i_1} x_{i_2} \cdots x_{i_d}| u \hskip .1 cm \text{for some}\hskip .1 cm \{i_1, \cdots, i_d\} \subseteq [n] \}$$

Then $a_{n,d,k}=\dim_K (I_{n,d})_k=|P_{n,d,k}|$.

Let $x_1^s || u$ be the highest power of $x_1$ in $u \in P_{n,d,k}$. Then $s$ can take values $0, \cdots, k-d+1$. We can divide $P_{n,d,k}$ into $k-d+2$ classes $P_{n,d,k,s}$ according to these $s$ values. It is easy to see that $P_{n,d,k,s}$ are pairwise disjoint, $|P_{n,d,k,0}|=a_{n-1,d,k}$ and $|P_{n,d,k,s}|=a_{n-1,d-1,k-s}$ for $s>0$. So $(2)$ follows easily.

$(3)$ is an immediate consequence of $(2)$ because $a_{n,d,k-1}=a_{n-1,d,k-1}+\overset{k-d}{\underset{s=1}{\sum}} a_{n-1,d-1,k-s-1}$.
\end{proof}

Let $F(x,y,z)=\underset{\min\{n,k\} \ge d \ge 1}{\sum} a_{n,d,k} x^n y^d z^k$ be the generating function of $a_{n,d,k}$. Then the following is true.

\begin{prop}
$F(x,y,z)=\frac{xyz(1-z)}{(1-x-xyz-z+xz)(1-z-x)}$
\end{prop}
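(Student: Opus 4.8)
The plan is to set up and solve the functional equation that the recurrence in Proposition 2.1(2) imposes on the trivariate generating function $F(x,y,z)=\sum a_{n,d,k}\,x^n y^d z^k$. The recurrence in part (2) is the cleanest tool, but it involves a sum over $s$; part (3) already rewrites it as a pure three-term relation $a_{n,d,k}=a_{n-1,d,k}+a_{n-1,d-1,k-1}+a_{n,d,k-1}-a_{n-1,d,k-1}$, which is better suited to generating-function manipulation because each term corresponds to a simple monomial shift. First I would multiply this relation by $x^n y^d z^k$ and sum over the region $\min\{n,k\}\ge d\ge 2$. Each of the four terms on the right becomes $F$ multiplied by the appropriate shift factor: $a_{n-1,d,k}\mapsto xF$, $a_{n-1,d-1,k-1}\mapsto xyzF$, $a_{n,d,k-1}\mapsto zF$, and $a_{n-1,d,k-1}\mapsto xzF$. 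This should yield, up to boundary corrections, an equation of the shape $F=(x+xyz+z-xz)F+(\text{correction terms})$, and solving gives the denominator $1-x-xyz-z+xz$ seen in the claimed formula.

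The main obstacle I expect is bookkeeping of the boundary terms. The recurrence in (3) is only valid for $\min\{n,k\}\ge d\ge 2$, so when I sum the shifted quantities over the full summation range of $F$ I pick up discrepancies coming from the hyperplanes $d=1$, $n=d$, and $k=d$ (the edges of the defining cone $\min\{n,k\}\ge d\ge 1$). The $d=1$ slice is governed by part (1), $a_{n,1,k}=\binom{n+k-1}{k}$, whose generating function $\sum_{n\ge 1,\,k\ge 1}\binom{n+k-1}{k}x^n y z^k$ I can write in closed form; I expect this is precisely what produces the factor $(1-z-x)$ in the denominator, since $\sum_{n,k}\binom{n+k-1}{k}x^n z^k$ is a standard two-variable geometric-type series summing to something with denominator $1-x-z$. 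Carefully accounting for the lower-order boundary slices and combining them is where the real work lies.

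Concretely, I would proceed as follows. I would define $G(x,y,z)$ to be the contribution of the $d=1$ layer and verify, using the known Hilbert series of the maximal ideal from Proposition 2.1(1), that it assembles into a rational function with denominator $1-x-z$ and the expected numerator. Then I would treat $F-G$, the $d\ge 2$ part, and apply relation (3) there; the shift operators turn the recurrence into an algebraic identity relating $F$, $G$, and the monomial factors, and the edge terms along $n=d$ and $k=d$ must be shown either to vanish or to be absorbed into $G$. Solving the resulting linear equation for $F$ should produce exactly $\frac{xyz(1-z)}{(1-x-xyz-z+xz)(1-z-x)}$.

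Finally I would sanity-check the answer against data rather than re-deriving it: extracting the coefficient of $y^1$ in $F$ must return $\sum_{n,k}\binom{n+k-1}{k}x^nz^k=\frac{xz}{(1-x-z)(\dots)}$-type expression matching Proposition 2.1(1), and low-order coefficients (for instance $a_{d,d,d}=1$, corresponding to the single squarefree monomial $x_1\cdots x_d$, and $a_{n,d,d}=\binom{n}{d}$) should agree with direct counts. This verification step is cheap and guards against sign errors in the boundary accounting, which is the step most prone to mistakes.
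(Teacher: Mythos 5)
Your plan is correct and is essentially the paper's own proof: the paper likewise sums recurrence (3) over $\min\{n,k\}\ge d\ge 2$, turns the four terms into the shifts $xF$, $xyzF$, $zF$, $xzF$ with the $d=1$ layer $\frac{xyz}{(1-z-x)(1-x)}$ (from Proposition 2.1(1)) subtracted as the only boundary correction, and solves the resulting linear equation for $F$. The edge terms along $n=d$ and $k=d$ that worry you do indeed vanish, since $a_{n,d,k}=0$ whenever $n<d$ or $k<d$.
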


\begin{proof}

By the definition of $F(x,y,z)$ and Proposition $2.1 (3)$, we have that
$F(x,y,z)=\underset{\min\{n,k\} \ge d \ge 1}{\sum} a_{n,d,k} x^n y^d z^k=\underset{\min\{n,k\} \ge 1}{\sum} a_{n,1,k} x^n y z^k+\underset{\min\{n,k\} \ge d \ge 2}{\sum} a_{n,d,k} x^n y^d z^k$

$=y \underset{\min\{n,k\} \ge 1}{\sum} \binom{n+k-1}{k} x^n z^k+\underset{\min\{n,k\} \ge d \ge 2}{\sum} (a_{n-1,d,k}+a_{n-1,d-1,k-1}+a_{n,d,k-1}-a_{n-1,d,k-1}) x^n y^d z^k$

$=y \overset{\infty}{\underset{n=1}{\sum}} \overset{\infty}{\underset{k=1}{\sum}} \binom{n+k-1}{k}z^k x^n+ \underset{\min\{n,k\} \ge d \ge 2}{\sum} a_{n-1,d,k} x^n y^d z^k+\underset{\min\{n,k\} \ge d \ge 2}{\sum} a_{n-1,d-1,k-1} x^n y^d z^k+$

$\underset{\min\{n,k\} \ge d \ge 2}{\sum} a_{n,d,k-1} x^n y^d z^k-\underset{\min\{n,k\} \ge d \ge 2}{\sum} a_{n-1,d,k-1} x^n y^d z^k$

$=\frac{xyz}{(1-z-x)(1-x)}+x[F(x,y,z)-\frac{xyz}{(1-z-x)(1-x)}]+xyzF(x,y,z)+z[F(x,y,z)-\frac{xyz}{(1-z-x)(1-x)}]-xz[F(x,y,z)-\frac{xyz}{(1-z-x)(1-x)}]$.

Solving for $F(x,y,z)$ from the previous equation gives

$$F(x,y,z)=\frac{xyz(1-z)}{(1-x-xyz-z+xz)(1-z-x)}$$

\end{proof}

Now we deduce a nice formula for $H_{I_{n,d}}(T)$ from Proposition $2.2$.

\begin{thm}
The Hilbert series of the squarefree Veronese ideal $I_{n,d}$ is given by

$$H_{I_{n,d}}(T)=\overset{n-1}{\underset{i=d-1}{\sum}} \binom{i}{d-1} T^d (1-T)^{-n+i-d+1}$$
\end{thm}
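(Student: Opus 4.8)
The plan is to read the Hilbert series straight off the closed form for $F(x,y,z)$ in Proposition 2.2. Since $a_{n,d,k}$ is by definition the coefficient of $x^n y^d z^k$ in $F$, the generating function of the Hilbert function of $I_{n,d}$ in the single variable $z$ is obtained by extracting the coefficient of $x^n y^d$; that is, I would show
$$H_{I_{n,d}}(T)=\big[x^n y^d\big]\,F(x,y,z)\Big|_{z=T},$$
for $n\ge d$, which is exactly the range in which the asserted sum is nonempty. Everything that follows is a formal computation with power series in $x$ and $y$ whose coefficients are rational functions of $z$, so no convergence issues arise.

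First I would rewrite the first denominator factor as $1-x-xyz-z+xz=(1-x)(1-z)-xyz$, so that
$$F(x,y,z)=\frac{xyz(1-z)}{\big[(1-x)(1-z)-xyz\big]\,(1-x-z)}.$$
The variable $y$ now appears only in the bracketed factor, which I expand geometrically as $\big[(1-x)(1-z)-xyz\big]^{-1}=\sum_{j\ge 0}(xyz)^j(1-x)^{-j-1}(1-z)^{-j-1}$. After multiplying by the prefactor, the total power of $y$ in the $j$-th term is $j+1$, so only $j=d-1$ contributes to $[y^d]$, and a short simplification gives
$$\big[y^d\big]F=\frac{z^d}{(1-z)^{d-1}}\cdot\frac{x^d}{(1-x-z)(1-x)^d}.$$

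Next I would extract $[x^n]$ from the remaining rational function of $x$. Writing $(1-x-z)^{-1}=\sum_{m\ge 0}x^m(1-z)^{-m-1}$ and $(1-x)^{-d}=\sum_{l\ge 0}\binom{d+l-1}{d-1}x^l$, the coefficient of $x^n$ selects the terms with $m+l=n-d$, yielding
$$H_{I_{n,d}}(z)=z^d\sum_{m=0}^{n-d}\binom{n-m-1}{d-1}\,(1-z)^{-d-m}.$$
Finally, the substitution $i=n-1-m$ (which sends $m=0$ to $i=n-1$ and $m=n-d$ to $i=d-1$, and turns $\binom{n-m-1}{d-1}$ into $\binom{i}{d-1}$ and the exponent $-d-m$ into $-n+i-d+1$) reverses the order of summation and produces $\sum_{i=d-1}^{n-1}\binom{i}{d-1}z^d(1-z)^{-n+i-d+1}$, which is the claimed formula after renaming $z$ as $T$.

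I expect the only real care to lie in the bookkeeping of the two coefficient extractions: tracking that $[y^d]$ forces $j=d-1$, that the binomial $\binom{d+l-1}{d-1}$ correctly records $[x^l](1-x)^{-d}$, and that the final reindexing reverses the summation range. There is no genuine obstacle, however, because the factorization $(1-x)(1-z)-xyz$ decouples the three variables enough that each extraction is a single geometric expansion.
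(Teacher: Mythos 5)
Your proposal is correct and takes essentially the same approach as the paper: both read the theorem off the closed form of $F(x,y,z)$ from Proposition 2.2 by formal geometric-series coefficient extraction. The only difference is the (immaterial) order of extraction --- you take $[y^d]$ first via the factorization $(1-x)(1-z)-xyz$ and then $[x^n]$, while the paper extracts $[x^n]$ first and then reads $[y^d]$ from the binomial expansion of $\bigl(1+\frac{yz}{1-z}\bigr)^i$; both computations check out and land on the same sum after reindexing.
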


\begin{proof}
By Proposition $2.2$, $F(x,y,z)=\underset{\min\{n,k\} \ge d \ge 1}{\sum} a_{n,d,k} x^n y^d z^k=\frac{xyz(1-z)}{(1-x-xyz-z+xz)(1-z-x)}$.

Now $\frac{xyz(1-z)}{(1-x-xyz-z+xz)(1-z-x)}=\frac{xyz}{1-z} \cdot \frac{1}{1-x-\frac{xyz}{1-z}} \cdot \frac{1}{1-\frac{x}{1-z}}=\frac{xyz}{1-z} \overset{\infty}{\underset{i=0}{\sum}} x^i (1+\frac{yz}{1-z})^i \overset{\infty}{\underset{j=0}{\sum}} \frac{x^j}{(1-z)^j}$

The coefficient of $x^n$ in $F(x,y,z)$ is $\frac{yz}{1-z} \underset{i+j=n-1}{\sum} (1+\frac{yz}{1-z})^i \frac{1}{(1-z)^j}=yz \overset{n-1}{\underset{i=0}{\sum}} (1+\frac{yz}{1-z})^i$

\noindent $\frac{1}{(1-z)^{n-i}}$. The coefficient of $y^d$ in $yz \overset{n-1}{\underset{i=0}{\sum}} (1+\frac{yz}{1-z})^i \frac{1}{(1-z)^{n-i}}$ is $\overset{n-1}{\underset{i=0}{\sum}} \binom{i}{d-1} z (\frac{z}{1-z})^{d-1}$

\noindent $\frac{1}{(1-z)^{n-i}}=\overset{n-1}{\underset{i=d-1}{\sum}} \binom{i}{d-1} z^d (1-z)^{-n+i-d+1}$.
Replacing $z$ with $T$ gives the formula $H_{I_{n,d}}(T)=$

\noindent $\overset{n-1}{\underset{i=d-1}{\sum}} \binom{i}{d-1} T^d (1-T)^{-n+i-d+1}$.
\end{proof}

As a corollary of Theorem $2.3$, we have

\begin{cor}

$a_{n,d,k}=\overset{n-1}{\underset{i=d-1}{\sum}} \binom{i}{d-1} \binom{n-i+k-2}{k-d}$.

\end{cor}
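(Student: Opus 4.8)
The plan is to obtain the corollary by simply reading off the coefficient of $T^k$ on both sides of the Hilbert series identity established in Theorem $2.3$. Since $H_{I_{n,d}}(T)=\sum_{k=d}^{\infty} a_{n,d,k} T^k$ by definition, the number $a_{n,d,k}$ is exactly the coefficient of $T^k$ in the closed form $\sum_{i=d-1}^{n-1} \binom{i}{d-1} T^d (1-T)^{-n+i-d+1}$, so the task is a term-by-term coefficient extraction from the right-hand side.

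First I would invoke the generalized binomial (negative binomial) expansion $(1-T)^{-m}=\sum_{j=0}^{\infty} \binom{m+j-1}{j} T^j$, valid for every positive integer $m$. In the $i$-th summand the exponent of $(1-T)$ is $-n+i-d+1=-(n-i+d-1)$, so one sets $m=n-i+d-1$. A quick check shows $m$ is genuinely a positive integer over the whole summation range: as $i$ runs from $d-1$ to $n-1$, the value $m=n-i+d-1$ runs from $n$ down to $d\ge 1$, so the expansion applies to each term.

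Next I would account for the prefactor $T^d$, which shifts the degree by $d$. Thus the coefficient of $T^k$ in $\binom{i}{d-1} T^d (1-T)^{-(n-i+d-1)}$ equals $\binom{i}{d-1}$ times the coefficient of $T^{k-d}$ in $(1-T)^{-(n-i+d-1)}$; taking $j=k-d$ in the expansion above gives $\binom{i}{d-1}\binom{(n-i+d-1)+(k-d)-1}{k-d}$, which simplifies to $\binom{i}{d-1}\binom{n-i+k-2}{k-d}$. Summing over $i$ from $d-1$ to $n-1$ yields the asserted formula. I would also note that the convention $\binom{m}{r}=0$ for $r<0$ makes the identity automatically consistent for $k<d$, where both sides vanish.

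The only point requiring care — and it is bookkeeping rather than a genuine obstacle — is the arithmetic of combining the exponent $-n+i-d+1$ with the index $j=k-d$ so that the upper entry $(n-i+d-1)+(k-d)-1$ collapses to $n-i+k-2$. There is no conceptual difficulty: the corollary is a direct consequence of Theorem $2.3$ together with the single fact that $(1-T)^{-m}$ has binomial coefficients $\binom{m+j-1}{j}$ as its power series coefficients.
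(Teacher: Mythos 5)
Your proposal is correct and is exactly the paper's argument: the paper's proof of Corollary $2.4$ is the one-line remark that the formula ``follows directly from the expansion'' of $H_{I_{n,d}}(T)=\sum_{i=d-1}^{n-1}\binom{i}{d-1}T^d(1-T)^{-n+i-d+1}$, and you have simply carried out that expansion explicitly via $(1-T)^{-m}=\sum_{j\ge 0}\binom{m+j-1}{j}T^j$ with $m=n-i+d-1$ and $j=k-d$. Your arithmetic $(n-i+d-1)+(k-d)-1=n-i+k-2$ and your check that $m$ ranges over the positive integers from $n$ down to $d$ are both correct.
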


\begin{proof}
It follows directly from the expansion of $H_{I_{n,d}}(T)=\overset{n-1}{\underset{i=d-1}{\sum}} \binom{i}{d-1} T^d (1-T)^{-n+i-d+1}$.

\end{proof}

In the next section, we will prove that the Hilbert depth of $I_{n,d}$ is equal to $d+\left \lfloor \binom{n}{d+1}/\binom{n}{d} \right\rfloor$.

\section{Hilbert depth of squarefree Veronese ideals}

By J. Uliczka \cite{Uli}, the Hilbert depth of $I_{n,d}$ is equal to $\max \{r:(1-T)^r H_{I_{n,d}}(T) \hskip .1 cm \textit{positive}\}$. The coefficient of $T^{d+1}$ in $(1-T)^r H_{I_{n,d}}(T)$ is $a_{n,d,d+1}-r a_{n,d,d}$. By Corollary $2.4$, $a_{n,d,d}=\overset{n-1}{\underset{i=d-1}{\sum}} \binom{i}{d-1}= \binom{n}{d}$ and $a_{n,d,d+1}=\overset{n-1}{\underset{i=d-1}{\sum}} \binom{i}{d-1} (n-i+d-1)=n\overset{n-1}{\underset{i=d-1}{\sum}} \binom{i}{d-1}-\overset{n-1}{\underset{i=d-1}{\sum}} \binom{i}{d-1} (i-d+1)=n \binom{n}{d}-d\overset{n-1}{\underset{i=d-1}{\sum}} \binom{i}{d}=n \binom{n}{d}-d \binom{n}{d+1}=d \binom{n}{d}+(n-d) \binom{n}{d}-d \binom{n}{d+1}=d \binom{n}{d}+(d+1) \binom{n}{d+1}-d \binom{n}{d+1}=d \binom{n}{d}+\binom{n}{d+1}$. So 
 $a_{n,d,d+1}-r a_{n,d,d} \ge 0$ implies that $r \le \frac{a_{n,d,d+1}}{a_{n,d,d}}=\frac{d \binom{n}{d}+\binom{n}{d+1}}{\binom{n}{d}}=d+\frac{\binom{n}{d+1}}{\binom{n}{d}}$. Therefore, $r \le 
d+\left \lfloor \binom{n}{d+1}/\binom{n}{d} \right\rfloor$ is a necessary condition for the positivity of $(1-T)^r H_{I_{n,d}}(T)$. We will show that it is also a sufficient condition.

Denote $(1-T)^r H_{I_{n,d}}(T)=\overset{\infty}{\underset{k=d}{\sum}} b_{n,d,k,r} T^k$. The following lemma is true.

\begin{lem}
$b_{n,d,k,r}=\overset{n-1}{\underset{i=d-1}{\sum}} \binom{i}{d-1} \binom{n-i+k-r-2}{k-d}$.

\end{lem}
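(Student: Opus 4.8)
The plan is to start from the closed form $H_{I_{n,d}}(T)=\sum_{i=d-1}^{n-1}\binom{i}{d-1}T^d(1-T)^{-n+i-d+1}$ established in Theorem $2.3$, multiply through by $(1-T)^r$, and extract the coefficient of $T^k$ summand by summand. Multiplying gives $(1-T)^r H_{I_{n,d}}(T)=\sum_{i=d-1}^{n-1}\binom{i}{d-1}T^d(1-T)^{-n+i-d+1+r}$, so the coefficient $b_{n,d,k,r}$ of $T^k$ equals $\sum_{i=d-1}^{n-1}\binom{i}{d-1}$ times the coefficient of $T^{k-d}$ in $(1-T)^{-n+i-d+1+r}$. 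This is precisely the computation already carried out for Corollary $2.4$ in the case $r=0$, now with the exponent of $(1-T)$ shifted by $r$.

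The key step will be to evaluate $[T^{k-d}](1-T)^{m}$ where $m=-n+i-d+1+r$. Using the generalized binomial theorem $(1-T)^{m}=\sum_{j\ge 0}\binom{m}{j}(-T)^j$, this coefficient is $(-1)^{k-d}\binom{m}{k-d}$. I would then apply the upper negation identity $(-1)^{j}\binom{m}{j}=\binom{j-m-1}{j}$, valid for the generalized binomial coefficient and any nonnegative integer $j$, with $j=k-d$. Substituting $m=-n+i-d+1+r$ yields $j-m-1=(k-d)-(-n+i-d+1+r)-1=n-i+k-r-2$, so that $[T^{k-d}](1-T)^m=\binom{n-i+k-r-2}{k-d}$. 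Summing over $i$ then gives the claimed formula.

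The main thing to get right is the bookkeeping, rather than any genuine difficulty: the exponent $-n+i-d+1+r$ need not be negative once $r$ is large, so I would rely on the paper's convention that $\binom{m}{s}=0$ for $s<0$ together with the fact that the upper negation identity holds uniformly for all integer (indeed real) $m$. This ensures that the single formula $b_{n,d,k,r}=\sum_{i=d-1}^{n-1}\binom{i}{d-1}\binom{n-i+k-r-2}{k-d}$ is correct in every regime and specializes to Corollary $2.4$ when $r=0$.
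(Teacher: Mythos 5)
Your proposal is correct and follows essentially the same route as the paper, which also multiplies the closed form of $H_{I_{n,d}}(T)$ from Theorem $2.3$ by $(1-T)^r$, combines the exponents into $(1-T)^{-n+i+r-d+1}$, and reads off the coefficient of $T^k$ by expansion. The only difference is that you spell out the expansion via the upper negation identity and flag the regime where the exponent becomes nonnegative, details the paper leaves implicit in the phrase ``follows directly from the expansion.''
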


\begin{proof}

It follows directly from the expansion of $(1-T)^r H_{I_{n,d}}(T)=(1-T)^r \overset{n-1}{\underset{i=d-1}{\sum}} \binom{i}{d-1} T^d (1-T)^{-n+i-d+1}=\overset{n-1}{\underset{i=d-1}{\sum}} \binom{i}{d-1} T^d (1-T)^{-n+i+r-d+1}$.

\end{proof}

\begin{lem}
For all positive integers $\min \{n,k\} \ge d$, we have 

$(1)$ $b_{n-r+p-1,p,k,p-1}=\binom{n+k-r-1}{k}$ for any integer $1 \le p \le d$;

$(2)$ $b_{n,d,k,r}=(-1)^{k-d} \overset{r-k+1}{\underset{t=1}{\sum}} \binom{n-(d-1)-t+d-1}{d-1} \binom{r-(d-1)-t}{k-d}+\binom{n+k-r-1}{k}$.

\end{lem}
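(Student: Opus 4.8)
The plan is to substitute the closed form of Lemma~3.1,
$b_{n,d,k,r}=\sum_{i=d-1}^{n-1}\binom{i}{d-1}\binom{n-i+k-r-2}{k-d}$,
into the Chu--Vandermonde convolution $\sum_{i=0}^{M}\binom{i}{a}\binom{M-i}{b}=\binom{M+1}{a+b+1}$ (valid for nonnegative integers $M,a,b$), while tracking the generalized binomial convention through the reflection identity $\binom{-m}{\ell}=(-1)^{\ell}\binom{m+\ell-1}{\ell}$. The whole content of both parts is that $\binom{n+k-r-1}{k}$ is the \emph{full} Chu--Vandermonde value, and the actual $b_{n,d,k,r}$ differs from it exactly by the terms of the sum where the upper index $n-i+k-r-2$ has gone negative.

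For part $(1)$ I would insert $(n,d,r)\mapsto(n-r+p-1,\,p,\,p-1)$ into Lemma~3.1. After simplifying the second binomial its summand becomes $\binom{i}{p-1}\binom{(n-r+k-2)-i}{k-p}$, with $i$ running from $p-1$ to $n-r+p-2$. Since $\binom{i}{p-1}=0$ for $i<p-1$, I lower the starting index to $i=0$; and since $\binom{(n-r+k-2)-i}{k-p}$ vanishes whenever its nonnegative top is strictly below $k-p$, the ``missing'' terms with $i$ between $n-r+p-1$ and $n-r+k-2$ all vanish, so the range may be completed to $i=0,\dots,n-r+k-2$. Chu--Vandermonde with $M=n-r+k-2$, $a=p-1$, $b=k-p$ then gives $\binom{n-r+k-1}{k}=\binom{n+k-r-1}{k}$, which is $(1)$. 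Here $p\le d\le k$ guarantees that the correction is empty, so no negative upper indices occur.

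For part $(2)$ I write $c=n+k-r-2$, so that (after dropping the lower limit to $0$ as above) $b_{n,d,k,r}=\sum_{i=0}^{n-1}\binom{i}{d-1}\binom{c-i}{k-d}$. I work in the regime $r\ge k-1$, i.e.\ $c\le n-1$, which is the only case the positivity analysis needs: when $k>r+1$ every summand in Lemma~3.1 is already nonnegative, so $b_{n,d,k,r}\ge 0$ automatically and formula $(2)$ is not invoked. Splitting the range at $i=c$, the block $i=0,\dots,c$ is a genuine Chu--Vandermonde sum equal to $\binom{c+1}{k}=\binom{n+k-r-1}{k}$, the second term of the assertion. In the block $i=c+1,\dots,n-1$ the top $c-i$ is negative, so I apply $\binom{c-i}{k-d}=(-1)^{k-d}\binom{i-c+k-d-1}{k-d}$ and substitute $t=n-i$, which makes $t$ range from $1$ to $n-c-1=r-k+1$. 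Routine simplification gives $\binom{i}{d-1}=\binom{n-t}{d-1}$ and $\binom{i-c+k-d-1}{k-d}=\binom{r-d+1-t}{k-d}$, so this block equals $(-1)^{k-d}\sum_{t=1}^{r-k+1}\binom{n-t}{d-1}\binom{r-d+1-t}{k-d}$. Adding the two blocks and rewriting $n-t=n-(d-1)-t+d-1$ and $r-d+1-t=r-(d-1)-t$ produces exactly formula $(2)$.

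The main obstacle is bookkeeping with the generalized binomial convention: one must locate the cut point $i=c$ so that the Chu--Vandermonde block carries only nonnegative upper indices while the correction block carries only negative ones, and must check that the ``completing'' terms in $(1)$ are genuinely zero rather than negative-top contributions. The reflection identity together with the substitution $t=n-i$ are the decisive moves that convert the negative tail into the clean alternating sum of $(2)$; once the index ranges are pinned down, everything else is mechanical.
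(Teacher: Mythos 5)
Your proof is correct, but it takes a genuinely different route from the paper's. For part $(1)$ the paper argues by induction on $p$: a telescoping computation with Pascal's rule shows $b_{n-r+p,p+1,k,p}=b_{n-r+p-1,p,k,p-1}$, with the base case $p=1$ read off from Corollary 2.4; you instead evaluate the sum in one stroke via the Chu--Vandermonde convolution $\sum_{i=0}^{M}\binom{i}{a}\binom{M-i}{b}=\binom{M+1}{a+b+1}$, after correctly checking that the completing terms vanish (nonnegative tops strictly below the bottom index). For part $(2)$ both arguments split the sum of Lemma 3.1 at the same place (your cut after $i=c=n+k-r-2$ is exactly the paper's split at $i=n+k-r-1$) and treat the negative-top tail identically, via the reflection identity and the substitution $t=n-i$ (the paper's $i=n-t+d-1$ is the same after shifting $t$ by $d-1$); the real difference is the main block, which the paper re-indexes as $b_{n+d-r-1,d,k,d-1}$ and then evaluates by invoking part $(1)$, whereas you evaluate it directly as another Chu--Vandermonde sum equal to $\binom{n+k-r-1}{k}$. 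Your route is more economical --- part $(1)$ becomes a special instance of the same convolution rather than the target of a separate induction --- at the cost of hand-tracking the sign and range conventions, which you do accurately. One small caveat: you establish $(2)$ only in the regime $k\le r+1$, while the lemma is stated for all $k$ with $\min\{n,k\}\ge d$; your justification that the excluded case is never needed is sound (it mirrors the paper's Remark 3.4), and in fact your own completion argument disposes of it anyway: for $k>r+1$ every top $n-i+k-r-2$ is nonnegative on the full range, the completing terms up to $i=n+k-r-2$ vanish because there $i>n+d-r-2$ (using $r\ge d-1$), and Chu--Vandermonde again yields $b_{n,d,k,r}=\binom{n+k-r-1}{k}$, which is $(2)$ with its empty sum. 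Both proofs share the same implicit requirement that $n+k-r-2\ge 0$ in the relevant parameter range (the paper's base case likewise presupposes $n-r\ge 1$), so this is not a gap.
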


\begin{proof}

We use induction on $p$ to prove $(1)$. 

When $p=1$, we have that $b_{n-r+p-1,p,k,p-1}=b_{n-r,1,k,0}=a_{n-r,1,k}$, which is equal to  $\overset{n-r-1}{\underset{i=0}{\sum}} \binom{n-i+k-2}{k-1}=\binom{n+k-r-1}{k}$ by Corollary $2.4$. So $(1)$ holds true in this case.

Suppose that $b_{n-r+q-1,q,k,q-1}=\binom{n+k-r-1}{k}$ for any $1 \le q \le p$. If $p=d$, we are done. Otherwise $p \le d-1$. By Lemma $3.1$, we have that

$b_{n-r+p,p+1,k,p}=\overset{n-r+p-1}{\underset{i=p}{\sum}} \binom{i}{p} \binom{n-i+k-r-2}{k-p-1}=\overset{n-r+p-1}{\underset{i=p}{\sum}} \binom{i}{p} [\binom{n-i+k-r-1}{k-p}-\binom{n-i+k-r-2}{k-p}]$

$=\overset{n-r+p-1}{\underset{i=p}{\sum}} \binom{i}{p} \binom{n-i+k-r-1}{k-p}-\overset{n-r+p-1}{\underset{i=p}{\sum}} \binom{i}{p} \binom{n-i+k-r-2}{k-p}$

$=\overset{n-r+p-1}{\underset{i=p}{\sum}} \binom{i}{p} \binom{n-i+k-r-1}{k-p}-\overset{n-r+p-1}{\underset{i=p+1}{\sum}} \binom{i-1}{p} \binom{n-i+k-r-1}{k-p}$

$=\binom{n-p+k-r-1}{k-p}+\overset{n-r+p-1}{\underset{i=p+1}{\sum}} \binom{i-1}{p-1} \binom{n-i+k-r-1}{k-p}$

$=\binom{n-p+k-r-1}{k-p}+\overset{n-r+p-2}{\underset{i=p}{\sum}} \binom{i}{p-1} \binom{n-i+k-r-2}{k-p}$

$=\overset{n-r+p-2}{\underset{i=p-1}{\sum}} \binom{i}{p-1} \binom{n-i+k-r-2}{k-p}=b_{n-r+p-1,p,k,p-1}$, which is equal to $\binom{n+k-r-1}{k}$ by the inductive assumption.

To prove $(2)$, note that $b_{n,d,k,r}=\overset{n-1}{\underset{i=d-1}{\sum}} \binom{i}{d-1} \binom{n-i+k-r-2}{k-d}$

$=\overset{n-1}{\underset{i=n+k-r-1}{\sum}} \binom{i}{d-1} \binom{n-i+k-r-2}{k-d}+\overset{n+k-r-2}{\underset{i=d-1}{\sum}} \binom{i}{d-1} \binom{n-i+k-r-2}{k-d}$

$=(-1)^{k-d} \overset{n-1}{\underset{i=n+k-r-1}{\sum}} \binom{i}{d-1} \binom{r+i+1-n-d}{k-d}+\overset{n+k-r-2}{\underset{i=d-1}{\sum}} \binom{i}{d-1} \binom{n-i+k-r-2}{k-d}$

$\overset{i=n-t+d-1}{=} (-1)^{k-d} \overset{d+r-k}{\underset{t=d}{\sum}} \binom{n-t+d-1}{d-1} \binom{r-t}{k-d}+\overset{n+k-r-2}{\underset{i=d-1}{\sum}} \binom{i}{d-1} \binom{n-i+k-r-2}{k-d}$

$=(-1)^{k-d} \overset{r-k+1}{\underset{t=1}{\sum}} \binom{n-(d-1)-t+d-1}{d-1} \binom{r-(d-1)-t}{k-d}+\overset{n+k-r-2}{\underset{i=d-1}{\sum}} \binom{i}{d-1} \binom{n-i+k-r-2}{k-d}$

$=(-1)^{k-d} \overset{r-k+1}{\underset{t=1}{\sum}} \binom{n-(d-1)-t+d-1}{d-1} \binom{r-(d-1)-t}{k-d}+\overset{n+d-r-2}{\underset{i=d-1}{\sum}} \binom{i}{d-1} \binom{n-i+k-r-2}{k-d}$

$=(-1)^{k-d} \overset{r-k+1}{\underset{t=1}{\sum}} \binom{n-(d-1)-t+d-1}{d-1} \binom{r-(d-1)-t}{k-d}+\overset{n+d-r-2}{\underset{i=d-1}{\sum}} \binom{i}{d-1} \binom{(n+d-r-1)-i+k-(d-1)-2}{k-d}$

$=(-1)^{k-d} \overset{r-k+1}{\underset{t=1}{\sum}} \binom{n-(d-1)-t+d-1}{d-1} \binom{r-(d-1)-t}{k-d}+b_{n+d-r-1,d,k,d-1}$. By $(1)$, $b_{n+d-r-1,d,k,d-1}=\binom{n+k-r-1}{k}$. So $(2)$ follows.

\end{proof}

To prove the positivity of $(1-T)^r H_{I_{n,d}}(T)$ for $r \le 
d+\left \lfloor \binom{n}{d+1}/\binom{n}{d} \right\rfloor=d-1+\left \lceil \frac{n-(d-1)}{d+1}\right\rceil$, it is sufficient to show the following.

\begin{prop}
Let $n$ and $d$ be positive integers with $n \ge d$, and let $r=d-1+\left \lceil \frac{n-(d-1)}{d+1}\right\rceil$. Then for all $k=d+1, \cdots, r$ we have 

\begin{equation}
\binom{n+k-r-1}{k} \ge \overset{r-k+1}{\underset{t=1}{\sum}} \binom{n-(d-1)-t+d-1}{d-1} \binom{r-(d-1)-t}{k-d}
\end{equation}

\end{prop}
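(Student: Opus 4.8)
The plan is to reduce the positivity of $(1-T)^r H_{I_{n,d}}(T)$ to the single inequality $(3.1)$ and then to prove $(3.1)$ by an induction that isolates the one genuinely sharp instance. First I would read off from Lemma $3.2(2)$, using $\binom{n-(d-1)-t+d-1}{d-1}=\binom{n-t}{d-1}$, that
\[ b_{n,d,k,r}=(-1)^{k-d}\sum_{t=1}^{r-k+1}\binom{n-t}{d-1}\binom{r-d+1-t}{k-d}+\binom{n+k-r-1}{k}. \]
When $k-d$ is even both summands are nonnegative, so $b_{n,d,k,r}\ge 0$ automatically; when $k-d$ is odd, nonnegativity is \emph{exactly} $(3.1)$. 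Hence it suffices to prove $(3.1)$ for every $k$ in the stated range. Setting $q:=r-d+1=\left\lceil\frac{n-d+1}{d+1}\right\rceil$, $j:=k-d$, $a:=n-q$, and substituting $s=q-t$ turns $(3.1)$ into the clean form
\[ \binom{a+j}{d+j}\ \ge\ \sum_{s=j}^{q-1}\binom{a+s}{d-1}\binom{s}{j},\qquad 1\le j\le q-1, \]
and the defining ceiling for $r$ is equivalent to $dq-1\le a\le dq+d-1$, a fact I would record at the outset.

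Fix $q,j$ and let $f_d(a):=\binom{a+j}{d+j}-\sum_{s=j}^{q-1}\binom{a+s}{d-1}\binom{s}{j}$. Applying Pascal's rule to every binomial yields the key discrete-derivative identity $f_d(a+1)-f_d(a)=f_{d-1}(a)$, because $\binom{a+1+j}{d+j}-\binom{a+j}{d+j}=\binom{a+j}{(d-1)+j}$ and $\binom{a+1+s}{d-1}-\binom{a+s}{d-1}=\binom{a+s}{(d-1)-1}$. I would then induct on $d$. The base case $d=1$ reads $\binom{a+j}{j+1}\ge\sum_{s=j}^{q-1}\binom{s}{j}=\binom{q}{j+1}$ by the hockey-stick identity, and this holds for all $a\ge q-1$ by monotonicity of $\binom{N}{j+1}$ in $N$. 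For the inductive step, the derivative identity together with the hypothesis $f_{d-1}\ge0$ on $[(d-1)q-1,\infty)$ shows that $f_d$ is nondecreasing on that interval; since $dq-1\ge(d-1)q-1$, this transports the claim down to the \emph{single} base value $f_d(dq-1)\ge0$.

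The base value $a=dq-1$ is the heart of the matter, and I would anchor it on the clean identity $f_d(dq-1)=0$ at $j=1$, namely
\[ \binom{dq}{d+1}=\sum_{s=1}^{q-1}s\binom{dq-1+s}{d-1}, \]
which can be checked by Abel summation and the hockey-stick formula. Structurally this is nothing other than the vanishing of the coefficient of $T^{d+1}$ at the critical $r$ when $n=(d+1)q-1$, i.e.\ the very constraint that pins down $r$ (there $\binom{n}{d+1}/\binom{n}{d}=q-1$, so $a_{n,d,d+1}-r\,a_{n,d,d}=0$). With this equality as an anchor, I would establish $f_d(dq-1)\ge0$ for $2\le j\le q-1$, where one expects strictly positive and growing slack; the cleanest route appears to be a secondary induction on $j$ (or $q$), whose extreme cases $j=q-1$ and $q=j+1$ each collapse to a unimodality comparison $\binom{N}{d+j}\ge\binom{N}{d-1}$ for an appropriate $N$.

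The main obstacle is precisely this base value at $a=dq-1$. The inequality is genuinely false for $a<dq-1$ — it already fails at $a=(d-1)q-1$ — so the exact base point cannot be relaxed, and at $j=1$ there is no slack whatsoever. Consequently the $a$-induction cannot close the argument by itself; it only propagates positivity upward from $a=dq-1$, and the sharp instance must be settled by an \emph{exact} evaluation rather than by an estimate. The remaining delicate bookkeeping is to verify that the parameter ranges of the nested inductions line up, so that the monotonicity window $[(d-1)q-1,\infty)$ always contains the target $a=dq-1$ and the anchoring identity at $j=1$ can be leveraged uniformly in $d$ and $q$.
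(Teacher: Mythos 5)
Your reduction work is sound and verifiable: the substitutions $q=r-d+1$, $j=k-d$, $a=n-q$, $s=q-t$ do transform $(3.1)$ into $\binom{a+j}{d+j}\ge\sum_{s=j}^{q-1}\binom{a+s}{d-1}\binom{s}{j}$, the ceiling condition is indeed equivalent to $dq-1\le a\le dq+d-1$, the discrete-derivative identity $f_d(a+1)-f_d(a)=f_{d-1}(a)$ is correct, the $d=1$ base case follows from the hockey-stick identity as you say, and the anchoring identity $\binom{dq}{d+1}=\sum_{s=1}^{q-1}s\binom{dq-1+s}{d-1}$ checks out (it is, as you observe, exactly the vanishing of the coefficient of $T^{d+1}$ at the critical $r$ for the minimal $n$ in its ceiling class). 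But the proof has a genuine gap at precisely the point you flag as the heart of the matter: the base value $f_d(dq-1)\ge 0$ for $2\le j\le q-1$ is never proved. You write that ``one expects strictly positive and growing slack'' and that ``the cleanest route appears to be a secondary induction on $j$ (or $q$),'' but no induction step is formulated — there is no identity or inequality relating $f_d(dq-1)$ at consecutive values of $j$ or $q$, and it is not at all obvious what such a relation would be, since changing $j$ alters every term $\binom{s}{j}$ of the sum while changing $q$ alters both the summation range and the base point $a=dq-1$ simultaneously. Only the two extreme cases ($j=1$ exactly, $j=q-1$ by unimodality, the latter of which does work since $d+q-1\le(d+1)q-2-(d-1)$ for $q\ge 2$) are settled; the entire middle range is conjecture. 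Since your own monotonicity-in-$a$ argument deliberately funnels the whole proposition through this single family of base inequalities, the proposal as it stands is a plausible strategy, not a proof.

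It is worth knowing that the paper sidesteps all of this: its proof of Proposition $3.3$ is a two-line citation of Proposition $3.1$ of \cite{Brun2} (the Bruns--Krattenthaler--Uliczka inequality for powers of the maximal ideal), applied with $r\mapsto r-(d-1)$, $n\mapsto n-(d-1)$, $s\mapsto d$, after which the terms with $r-k+1<t\le r-(d-1)$ vanish because $\binom{r-(d-1)-t}{k-d}=0$ there. So the hard combinatorial content you are trying to reprove from scratch is exactly the content of that cited result, and it is genuinely nontrivial — BKU devote a substantial argument to it. If you want a self-contained proof along your lines, the missing step is to establish $\binom{dq-1+j}{d+j}\ge\sum_{s=j}^{q-1}\binom{dq-1+s}{d-1}\binom{s}{j}$ for all $2\le j\le q-2$; alternatively, consult how \cite{Brun2} handles their sharp case, since your reduction shows the two problems are equivalent up to the parameter shift.
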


\begin{rem}
The inequality in Proposition is trivially true if $r+1 \le k$.
\end{rem}

\begin{proof} Replacing $r$, $n$ and $s$ in Proposition $3.1$ in \cite{Brun2} with $r-(d-1)$, $n-(d-1)$ and $d$ respectively, we have 

{\small
$$\binom{[n-(d-1)]+k-[r-(d-1)]-1}{k} \ge \overset{r-(d-1)}{\underset{t=1}{\sum}} \binom{n-(d-1)-t+d-1}{d-1} \binom{r-(d-1)-t}{k-d}$$}

Because $\binom{r-(d-1)-t}{k-d}=0$ for $r-k+1<t \le r-(d-1)$, after simplification, the above inequality becomes

$$\binom{n+k-r-1}{k} \ge \overset{r-k+1}{\underset{t=1}{\sum}} \binom{n-(d-1)-t+d-1}{d-1} \binom{r-(d-1)-t}{k-d}.$$

This completes the proof of Proposition $3.3$.
\end{proof}

By Lemma $3.2 (2)$, Proposition $3.3$, Remark $3.4$ and the necessary condition $r \le 
d+\left \lfloor \binom{n}{d+1}/\binom{n}{d} \right\rfloor$ for the positivity of $(1-T)^r H_{I_{n,d}}(T)$, we conclude that the Hilbert depth of $I_{n,d}$ is equal to $d+\left \lfloor \binom{n}{d+1}/\binom{n}{d} \right\rfloor=d+\left \lfloor \frac{n-d}{d+1} \right\rfloor=d-1+\left \lceil \frac{n-(d-1)}{d+1} \right\rceil$. This completes the proof of Theorem $1.2(2)$.

{}

{\footnotesize DEPARTMENT OF MATHEMATICS, ANHUI UNIVERSITY, HEFEI, ANHUI, 230039, CHINA}

$\textit{E-mail address:}$ ge1968@126.com

\medskip

{\footnotesize DEPARTMENT OF MATHEMATICS, SUNY CANTON, 34 CORNELL DRIVE, CANTON, 

\hskip .05 cm NY 13617, USA}

$\textit{E-mail address:}$ linj@canton.edu

\medskip

{\footnotesize DEPARTMENT OF MATHEMATICS, SUNY CANTON, 34 CORNELL DRIVE, CANTON, 

\hskip .05 cm NY 13617, USA}

{\footnotesize DEPARTMENT OF MATHEMATICS, ANHUI ECONOMIC MANAGEMENT INSTITUTE, HEFEI, 

\hskip .05 cm ANHUI, 230059, CHINA}

$\textit{E-mail address:}$ wangy@canton.edu
\end{document}